\DeclareMathOperator{\sign}{sign}
\title[]{ \large A simple proof of the generalized Leibniz rule \\  on bounded Euclidean domains}
\author{Quoc-Hung Nguyen}
\author{Yannick Sire}
\author{Juan-Luis V\'azquez}
\address{ShanghaiTech University,
	393 Middle Huaxia Road, Pudong,
	Shanghai, 201210, China}
\email{qhnguyen@shanghaitech.edu.cn }
\address{Department of Mathematics, Johns Hopkins University, Baltimore, MD 21218, United States}
\email{ysire1@jhu.edu}
\address{Departamento de Matem\'aticas, Universidad Aut\'onoma de Madrid, 28049 Madrid, Spain}
\email{ juanluis.vazquez@uam.es}
\newtheorem{theorem}{Theorem}[section]
\theoremstyle{definition}
\newtheorem{remark}{Remark}[section]
\numberwithin{equation}{section}
\def\eqn#1$$#2$${\begin{equation}\label#1#2\end{equation}}
\def\charfn_#1{{\raise1.2pt\hbox{$\chi_{\kern-1pt\lower3pt\hbox{{$\scriptstyle#1$}}}$}}}
\def\dist{\operatorname{dist}}
\def\er{\mathbb R}
\def\mean#1{\mathchoice%
          {\mathop{\kern 0.2em\vrule width 0.6em height 0.69678ex depth -0.58065ex
                  \kern -0.8em \intop}\nolimits_{\kern -0.4em#1}}%
          {\mathop{\kern 0.1em\vrule width 0.5em height 0.69678ex depth -0.60387ex
                  \kern -0.6em \intop}\nolimits_{#1}}%
          {\mathop{\kern 0.1em\vrule width 0.5em height 0.69678ex
              depth -0.60387ex
                  \kern -0.6em \intop}\nolimits_{#1}}%
          {\mathop{\kern 0.1em\vrule width 0.5em height 0.69678ex depth -0.60387ex
                  \kern -0.6em \intop}\nolimits_{#1}}}
\def\vintslides_#1{\mathchoice%
          {\mathop{\kern 0.1em\vrule width 0.5em height 0.697ex depth -0.581ex
                  \kern -0.6em \intop}\nolimits_{\kern -0.4em#1}}%
          {\mathop{\kern 0.1em\vrule width 0.3em height 0.697ex depth -0.604ex
                  \kern -0.4em \intop}\nolimits_{#1}}%
          {\mathop{\kern 0.1em\vrule width 0.3em height 0.697ex depth -0.604ex
                  \kern -0.4em \intop}\nolimits_{#1}}%
          {\mathop{\kern 0.1em\vrule width 0.3em height 0.697ex depth -0.604ex
                  \kern -0.4em \intop}\nolimits_{#1}}}
\newcommand{\aveint}[2]{\mathchoice%
          {\mathop{\kern 0.2em\vrule width 0.6em height 0.69678ex depth -0.58065ex
                  \kern -0.8em \intop}\nolimits_{\kern -0.45em#1}^{#2}}%
          {\mathop{\kern 0.1em\vrule width 0.5em height 0.69678ex depth -0.60387ex
                  \kern -0.6em \intop}\nolimits_{#1}^{#2}}%
          {\mathop{\kern 0.1em\vrule width 0.5em height 0.69678ex depth -0.60387ex
                  \kern -0.6em \intop}\nolimits_{#1}^{#2}}%
          {\mathop{\kern 0.1em\vrule width 0.5em height 0.69678ex depth -0.60387ex
                  \kern -0.6em \intop}\nolimits_{#1}^{#2}}}
\newtoks\by
\newtoks\paper
\newtoks\book
\newtoks\jour
\newtoks\yr
\newtoks\pages
\newtoks\vol
\newtoks\publ
\def\name[#1, #2]{#1 #2}
\def\ota{{\hbox{\bf ???}}}
\def\cLear{\by=\ota\paper=\ota\book=\ota\jour=\ota\yr=\ota
\pages=\ota\vol=\ota\publ=\ota}
\def\endpaper{\the\by, \textit{\the\paper},
{\the\jour} \textbf{\the\vol} (\the\yr), \the\pages.\cLear}
\def\endbook{\the\by, \textit{\the\book},
\the\publ, \the\yr.\cLear}
\def\endpap{\the\by, \textit{\the\paper}, \the\jour.\cLear}
\def\endproc{\the\by, \textit{\the\paper}, \the\book, \the\publ,
\the\yr, \the\pages.\cLear}
\definecolor{darkgreen}{rgb}{0.1, .65, .1}
\newcommand{\nc}{\normalcolor}
\begin{document}

\begin{abstract}
This note is devoted to a simple proof of the generalized Leibniz rule  in bounded domains. The operators under consideration are the so-called spectral Laplacian and the restricted Laplacian. Equations involving such operators have been lately considered by Constantin and Ignatova in the framework of the SQG equation \cite{CI} in bounded domains, and by  two of the  authors \cite{HungJuan} in the framework of the porous medium with nonlocal pressure in bounded domains. We will use the estimates in this work in a forthcoming paper on the study of Porous Medium Equations with pressure given by Riesz-type potentials.
\end{abstract}

\maketitle

\tableofcontents

\section{Introduction}	

Commutator estimates are instrumental in the study of PDEs. Recently, several models arising in Fluid dynamics for instance, involve Fourier multipliers in $\mathbb R^n$ of the type $|\xi|^{2\alpha}$ where $\alpha >0$ is real.  The case of bounded domains, hence outside of the theory of multipliers, has been considered  e.g. in \cite{CI}. \nc

The purpose of the present note is to provide a very simple proof of a commutator estimate in the setting of operators defined on bounded domains. In such a case, one cannot rely on the classical theory of para-differential operators, and one has to use other types of tools to derive the estimates. Heat kernel methods are very  useful in this setting since they require very little a priori assumptions on the domain. Here we consider another approach, much more general somehow, relying on harmonic extensions as in  \cite{caffaSilvestre,Stinga},  and inspired by \cite[Lemma 2]{HungJuan}.

\vskip-1cm

\subsection*{Fractional operators on bounded domains}

In the following we consider $\Omega$ a bounded smooth domain of $\mathbb R^d$ with $d \geq 1$. We first list the two nonlocal operators that serve as motivation.

\subsubsection*{The Spectral Laplacian}
Consider the eigensystem ($\varphi_j,\lambda_j)$ for the Dirichlet Laplacian acting in $\Omega$, namely
$-\Delta \varphi_j=\lambda_j\varphi_j$ with homogeneous Dirichlet boundary conditions. It is well known that $0<\lambda_1\leq...\leq\lambda_j\leq...$, that $\lambda_j \asymp j^{2/d}$, and that $-\Delta$ is a positive self-adjoint operator in $L^2(\Omega)$ with
domain $D(-\Delta) = H^2(\Omega)\cap H^1_0(\Omega)$. The ground state $\varphi_1$ is positive and $\varphi_1(x)\asymp d(x)$ for all $x\in \Omega$, where $d(x)$ denotes distance to boundary. For all $0<\alpha<1$  we define the  \sl spectral fractional Laplacian\rm,  $(-\Delta)_{sp}^{\alpha}$, by
\begin{equation}
(-\Delta)_{sp}^{\alpha}u=\sum_{j=1}^{\infty}\lambda_j^{\alpha}u_j \varphi_j,
\qquad u_j=\int_{\Omega}u(x)\varphi_j(x)dx\,.
\end{equation}

\subsubsection*{The Restricted Laplacian}

One can define a fractional Laplacian operator by using the integral representation in terms of hypersingular kernels (for instance for locally $C^{1,1}$ functions and bounded over $\mathbb R^d$)
\begin{equation}\label{sLapl.Rd.Kernel}
(-\Delta_{\mathbb R^d})^{\alpha}  g(x)= c_{d,s}\mbox{
P.V.}\int_{\mathbb{R}^d} \frac{g(x)-g(z)}{|x-z|^{d+2\alpha}}\,dz,
\end{equation}
where $c_{d,\alpha}>0$ is a normalization constant. In this case we materialize the zero Dirichlet condition  by restricting the operator to act only on functions that are zero outside $\Omega$. We will call  the operator defined in such a way the \textit{restricted fractional Laplacian}.

It is a well-known fact that the two previously defined operators are {\sl different}.  This is easily recognized by simple properties, like the fact that they have different spectral sequences, or the fact that the solutions of the Dirichlet Problem have different boundary behaviour and a different Green function, see \cite{BonSirVaz, Bucur, CarrCIME, ChenSong, MusNaz14, ServVa14}.

\medskip

\subsection*{{ Commutator results and the idea of the proof}}

Our main results are the following. The first deals with our main contribution, i.\,e., a Leibniz rule for the operator defined above as the spectral Laplacian.

\begin{theorem} \label{thm-kapon1}  Let $0<\beta\leq \alpha< { \frac{1}{2}}$ and  $f,g,h \in C^\infty_0(\Omega)$. Let $(-\Delta)_{sp}^\alpha$ denote the {\sl spectral Laplacian} acting in a smooth bounded domain $\Omega$ in $\mathbb R^d$, $d\geq 1$. There exists $C>0$ such that
	\begin{align}\label{kato-ponce1}
	&||(-\Delta)_{sp}^{\alpha}(gh)-g\,(-\Delta)_{sp}^{\alpha} h- h \, (-\Delta)_{sp}^{\alpha}g||_{L^2(\Omega)}\\
	&\leq C ||(-\Delta)_{sp}^{\frac{\beta}{2}} h||_{L^2(\Omega)}  ||g||_{L^\infty(\Omega)}^{\frac{\beta}{2\alpha}}||(-\Delta)_{sp}^\alpha g||_{L^\infty(\Omega)}^{\frac{2\alpha-\beta}{2\alpha}}.\nonumber
		\end{align}
		
\end{theorem}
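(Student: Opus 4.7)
The plan is to exploit the Caffarelli--Silvestre--Stinga extension tailored to the spectral Laplacian. For $u\in C_0^\infty(\Omega)$, let $U$ denote its $\alpha$-extension on the cylinder $\Omega\times(0,\infty)$, that is the solution of
\[
\dive_{x,y}\bigl(y^{1-2\alpha}\nabla U\bigr)=0\ \text{in}\ \Omega\times(0,\infty),\qquad U(x,0)=u(x),\qquad U\equiv0\ \text{on}\ \partial\Omega\times(0,\infty),
\]
with the trace identification $(-\Delta)_{sp}^\alpha u=-c_\alpha\lim_{y\to0^+}y^{1-2\alpha}\partial_y U$. Let $G,H,W$ be the $\alpha$-extensions of $g$, $h$ and $gh$, respectively, and set $V:=W-GH$. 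A direct Leibniz computation, exploiting that $G$ and $H$ separately solve the homogeneous extension equation, gives
\[
\dive\bigl(y^{1-2\alpha}\nabla V\bigr)=-2\,y^{1-2\alpha}\,\nabla G\cdot\nabla H\ \text{in}\ \Omega\times(0,\infty),\qquad V\equiv0\ \text{on}\ \partial\bigl(\Omega\times(0,\infty)\bigr).
\]
Writing $y^{1-2\alpha}\partial_y(GH)=(y^{1-2\alpha}\partial_y G)H+G(y^{1-2\alpha}\partial_y H)$ and sending $y\to0^+$, the two Neumann traces collapse to $h\,(-\Delta)_{sp}^\alpha g$ and $g\,(-\Delta)_{sp}^\alpha h$, so the commutator of interest is
\[
C:=(-\Delta)_{sp}^\alpha(gh)-g\,(-\Delta)_{sp}^\alpha h-h\,(-\Delta)_{sp}^\alpha g=-c_\alpha\lim_{y\to0^+}y^{1-2\alpha}\partial_y V.
\]

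Next I would dualize. For $\phi\in C_0^\infty(\Omega)$ with $\alpha$-extension $\Phi$, apply Green's identity on the cylinder. Because $V$ vanishes on the whole boundary of $\Omega\times(0,\infty)$ and $\Phi$ is a homogeneous solution of the extension equation, the terms $\int y^{1-2\alpha}\nabla V\cdot\nabla\Phi$ and $\int V\dive(y^{1-2\alpha}\nabla\Phi)$ both vanish upon integration by parts, producing the key bilinear identity
\[
\int_\Omega\phi\,C\,dx=-2c_\alpha\int_0^\infty\!\!\int_\Omega y^{1-2\alpha}\,\Phi\,\nabla G\cdot\nabla H\,dx\,dy.
\]
By $L^2$-duality, it suffices to bound the right-hand side by $\|\phi\|_{L^2}$ times the target RHS of \eqref{kato-ponce1}.

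Using $\|\Phi(\cdot,y)\|_{L^2_x}\le\|\phi\|_{L^2}$ (uniform in $y$, from the spectral expansion of $\Phi$ in the eigenbasis $\{\varphi_j\}$) and Cauchy--Schwarz in $x$, matters reduce to
\[
\int_0^\infty y^{1-2\alpha}\,\|\nabla G(\cdot,y)\|_{L^\infty_x}\,\|\nabla H(\cdot,y)\|_{L^2_x}\,dy.
\]
Splitting the weight as $y^{1-2\alpha}=y^{(1-2\beta)/2}\cdot y^{(1-4\alpha+2\beta)/2}$ and applying Cauchy--Schwarz in $y$, the $H$-factor becomes $\int_0^\infty y^{1-2\beta}\|\nabla H\|_{L^2_x}^2\,dy\asymp\|(-\Delta)_{sp}^{\beta/2}h\|_{L^2}^2$ by Plancherel applied to the spectral expansion of $H$; this reproduces exactly the $h$-factor in the theorem.

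The main obstacle is the remaining $G$-factor $\int_0^\infty y^{1-4\alpha+2\beta}\|\nabla G(\cdot,y)\|_{L^\infty_x}^2\,dy$. The target is the pointwise interpolation bound
\[
\|\nabla G(\cdot,y)\|_{L^\infty_x}\lesssim\min\!\Bigl\{y^{-1}\|g\|_{L^\infty},\ y^{2\alpha-1}\|(-\Delta)_{sp}^\alpha g\|_{L^\infty}\Bigr\}.
\]
The first inequality comes from the subordination representation $G(\cdot,y)=\tfrac{1}{\Gamma(\alpha)}\int_0^\infty u^{\alpha-1}e^{-u}\,e^{-y^2L/(4u)}g\,du$ (with $L=-\Delta_{sp}$) combined with the Dirichlet heat-semigroup gradient estimate $\|\nabla e^{-tL}g\|_{L^\infty}\lesssim t^{-1/2}\|g\|_{L^\infty}$; the second follows by applying the same representation to $g=L^{-\alpha}(L^\alpha g)$ using $L^{-\alpha}=\tfrac{1}{\Gamma(\alpha)}\int_0^\infty t^{\alpha-1}e^{-tL}dt$. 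Splitting the $y$-integral at the balance scale $y^\ast\asymp(\|g\|_{L^\infty}/\|(-\Delta)_{sp}^\alpha g\|_{L^\infty})^{1/(2\alpha)}$ then yields precisely the exponents $\beta/(2\alpha)$ and $(2\alpha-\beta)/(2\alpha)$ in the final bound. The delicate point — the only one that is not bookkeeping from the extension formalism — is establishing these $L^\infty$ gradient estimates on the extension in the bounded-domain setting, where Fourier-multiplier techniques are unavailable and one has to rely on semigroup and heat-kernel bounds adapted to the Dirichlet Laplacian.
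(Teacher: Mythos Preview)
Your argument is correct and reaches the same estimate, but the route differs from the paper's in one essential respect. Both proofs set up the same extensions $G,H,W$ and the defect $V=W-GH$ (the paper calls these $U,V,W,Z$), and both ultimately rest on the same two $L^\infty$ gradient bounds $\|\nabla G(\cdot,y)\|_{L^\infty}\lesssim\min\{y^{-1}\|g\|_{L^\infty},\,y^{2\alpha-1}\|(-\Delta)_{sp}^\alpha g\|_{L^\infty}\}$, obtained from the subordination formula and Davies-type heat-kernel gradient estimates. The difference is in how the $L^2$ norm of the commutator is extracted. The paper does \emph{not} dualize: it tests the equation for the defect against the singular multiplier $y^{-2\alpha}Z$ of the defect itself, which directly produces $\tfrac{1}{4\alpha}\|C\|_{L^2}^2$ as a boundary term; a second self-test with $y^{-2(\alpha-\beta)}Z$ together with the one-dimensional Hardy inequality then controls the weighted $L^2$ norm of $Z$ that arises. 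Your duality against the extension $\Phi$ of a test function bypasses both the singular self-test and Hardy's inequality: after the bilinear identity you need only Cauchy--Schwarz in $x$ and $y$, the $L^2$ contraction $\|\Phi(\cdot,y)\|_{L^2}\le\|\phi\|_{L^2}$, and a direct evaluation of $\int_0^\infty y^{1-4\alpha+2\beta}\|\nabla G(\cdot,y)\|_{L^\infty}^2\,dy$ by splitting at the balance scale. Your spectral-Plancherel computation of $\int_0^\infty y^{1-2\beta}\|\nabla H\|_{L^2}^2\,dy$ (with $H$ the $\alpha$-extension) is also more direct than the paper's comparison argument with the $\beta$-extension; the constant $\int_0^\infty s^{1-2\beta}(|\tilde f_\alpha|^2+|\tilde f_\alpha'|^2)\,ds$ is indeed finite precisely for $0<\beta<2\alpha$. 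In short: same infrastructure, but the paper uses self-testing with weighted multipliers plus Hardy, while you use duality plus a balance-scale $y$-integration; your path is arguably a bit more elementary, whereas the paper's avoids introducing an auxiliary test function and makes the energy structure of the defect equation more visible.
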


 As a matter of fact, the proof of Theorem \ref{thm-kapon1} is an adaptation of a strategy that can be easily implemented also in the case of the fractional Laplacian defined  {\sl in the whole of $\mathbb R^d$}. We state then the following

\begin{theorem} \label{thm-kapon2}  Let $0<\beta\leq \alpha<{\frac{1}{2}}$ and  $f,g,h \in C^\infty_0(\mathbb R^d)$ for $d \geq 1$. Let $(-\Delta)^\alpha$  denote the operator with Fourier multiplier  $|\xi|^{2\alpha}$ in $\mathbb R^d$ with $d\geq 1$. There exists $C>0$ such that
	\begin{align}\label{kato-ponce2}
	&||(-\Delta)^{\alpha}(gh)-g\,(-\Delta)^{\alpha} h- h \, (-\Delta)^{\alpha}g||_{L^2(\mathbb R^d)}\\
	&\leq C ||(-\Delta)^{\frac{\beta}{2}} h||_{L^2(\mathbb R^d)}  ||g||_{L^\infty(\mathbb R^d)}^{\frac{\beta}{2\alpha}}||(-\Delta)^\alpha g||_{L^\infty(\mathbb R^d)}^{\frac{2\alpha-\beta}{2\alpha}}.\nonumber
	\end{align}
\end{theorem}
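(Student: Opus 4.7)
The plan is to adapt the strategy used for Theorem~\ref{thm-kapon1} to the Euclidean setting, replacing the spectral extension on $\Omega$ by the Caffarelli--Silvestre extension on the upper half-space $\mathbb R^{d+1}_+$. For $u \in C^\infty_0(\mathbb R^d)$, let $U(x,y)$ denote its $\alpha$-harmonic extension: the unique bounded solution of $\operatorname{div}_{x,y}(y^{1-2\alpha}\nabla U)=0$ in $\mathbb R^{d+1}_+$ with trace $u$, normalized so that $(-\Delta)^\alpha u = -c_\alpha\lim_{y\to 0^+} y^{1-2\alpha}\partial_y U$. Write $U,V,W$ for the extensions of $g,h,gh$. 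Since $U$ and $V$ are both $\alpha$-harmonic, a direct calculation gives
\[
\operatorname{div}\bigl(y^{1-2\alpha}\nabla(UV)\bigr) = 2\, y^{1-2\alpha}\,\nabla U\cdot\nabla V,
\]
so the defect $R:=W-UV$ satisfies $R|_{y=0}=0$ together with $\operatorname{div}(y^{1-2\alpha}\nabla R)=-2y^{1-2\alpha}\nabla U\cdot\nabla V$; its Dirichlet--Neumann image is precisely the commutator,
\[
T := (-\Delta)^\alpha(gh) - g(-\Delta)^\alpha h - h(-\Delta)^\alpha g = -c_\alpha\lim_{y\to 0^+} y^{1-2\alpha}\partial_y R.
\]
Testing against the $\alpha$-harmonic extension $\Phi$ of an arbitrary $\varphi \in C^\infty_0(\mathbb R^d)$ and applying Green's identity for $\operatorname{div}(y^{1-2\alpha}\nabla\cdot)$---the cross-term $\int y^{1-2\alpha}\nabla R\cdot\nabla\Phi$ vanishes because $\Phi$ is $\alpha$-harmonic and $R$ vanishes on the boundary---yields the clean representation
\[
\int_{\mathbb R^d} T\varphi\,dx = -2c_\alpha \int_{\mathbb R^{d+1}_+} y^{1-2\alpha}\,\Phi\,\nabla U\cdot\nabla V\,dx\,dy.
\]

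To extract the bound, I split the weight as $y^{1-2\alpha}=y^{(1-4\alpha+2\beta)/2}\cdot y^{(1-2\beta)/2}$ and apply Cauchy--Schwarz on $\mathbb R^{d+1}_+$:
\[
\left|\int T\varphi\right| \le 2c_\alpha \left(\int y^{1-4\alpha+2\beta}|\Phi|^2|\nabla U|^2\,dx\,dy\right)^{1/2}\left(\int y^{1-2\beta}|\nabla V|^2\,dx\,dy\right)^{1/2}.
\]
The second factor is a routine Plancherel / Bessel-kernel calculation: writing $\widehat V(\xi,y)=\theta(|\xi|y)\widehat h(\xi)$ with the Bessel profile $\theta(t)=\frac{2^{1-\alpha}}{\Gamma(\alpha)}t^\alpha K_\alpha(t)$ and substituting $t=|\xi|y$ produces $\int y^{1-2\beta}|\nabla V|^2 \le C\|(-\Delta)^{\beta/2}h\|_{L^2}^2$, the integral converging precisely because the hypothesis $\beta\le\alpha$ forces $\beta<2\alpha$. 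For the first factor I use the $L^2$-contraction $\|\Phi(\cdot,y)\|_{L^2_x}\le\|\varphi\|_{L^2}$ (the Poisson kernel of the $\alpha$-extension is a probability kernel) together with the two pointwise decay estimates
\[
\|\nabla U(\cdot,y)\|_{L^\infty_x} \le C\,\min\bigl(y^{-1}\|g\|_{L^\infty},\ y^{2\alpha-1}\|(-\Delta)^\alpha g\|_{L^\infty}\bigr).
\]
Splitting the $y$-integral at the critical scale $y_*:=(\|g\|_{L^\infty}/\|(-\Delta)^\alpha g\|_{L^\infty})^{1/2\alpha}$ and using the $(-\Delta)^\alpha g$-bound on $(0,y_*)$ and the $\|g\|_{L^\infty}$-bound on $(y_*,\infty)$, the two resulting one-variable integrals both converge thanks to $0<\beta<2\alpha$ and balance to $C\|g\|_{L^\infty}^{\beta/\alpha}\|(-\Delta)^\alpha g\|_{L^\infty}^{(2\alpha-\beta)/\alpha}$. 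Taking square roots and dualizing $\|T\|_{L^2}=\sup_{\|\varphi\|_{L^2}\le 1}|\int T\varphi|$ reproduces the stated estimate with the correct exponents.

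The main technical point requiring care is the second of the two pointwise gradient estimates above. For the normal component $\partial_y U$ the bound is a one-line consequence of the observation that $y^{1-2\alpha}\partial_y U$ is itself the $(1-\alpha)$-harmonic extension of $-c_\alpha^{-1}(-\Delta)^\alpha g$, to which the maximum principle on the conjugate extension applies. The tangential component $\nabla_x U$ is more delicate, since it does not admit such a clean conjugate interpretation; it must be treated via classical interior gradient estimates for the degenerate-elliptic operator $\operatorname{div}(y^{1-2\alpha}\nabla\cdot)$---typically a Caccioppoli argument on half-balls of size comparable to $y$, combined with the $L^\infty$ control of $U$ and of $y^{1-2\alpha}\partial_y U$---to recover the same $y^{2\alpha-1}$ scaling. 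Transporting this tangential estimate from the spectral framework to the Caffarelli--Silvestre extension is the heart of what is inherited from \cite[Lemma 2]{HungJuan}; once it is in hand, the manipulation described above closes the proof.
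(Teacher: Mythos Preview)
Your route differs from the paper's in one genuine way. The paper tests the equation for $Z=W-UV$ against $y^{-2\alpha}Z$, which produces $\|T\|_{L^2}^2$ directly on the left but leaves a weighted norm $\int y^{-1-2(2\alpha-\beta)}|Z|^2$ on the right, to be handled by a second testing (against $y^{-2(\alpha-\beta)}Z$) and Hardy's inequality. You instead pair against the $\alpha$-harmonic extension $\Phi$ of an arbitrary $\varphi$ and dualize; the Hardy manoeuvre is replaced by the trivial contraction $\|\Phi(\cdot,y)\|_{L^2}\le\|\varphi\|_{L^2}$ together with an elementary one-variable integral in $y$. This is a legitimate and somewhat cleaner variant of the same extension strategy.

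There is, however, a genuine gap in the step you yourself flag as delicate---and it is shared by the paper's own argument. The tangential bound
\[
\|\nabla_x U(\cdot,y)\|_{L^\infty}\le C\,y^{2\alpha-1}\|(-\Delta)^\alpha g\|_{L^\infty}
\]
is \emph{false} for $\alpha\ge\tfrac12$. Since $\nabla_x U(\cdot,y)\to\nabla_x g\not\equiv 0$ as $y\to 0^+$, for $\alpha>\tfrac12$ the left side stays bounded below while $y^{2\alpha-1}\to 0$; for $\alpha=\tfrac12$ the claim reduces to $\|\nabla g\|_{L^\infty}\lesssim\|(-\Delta)^{1/2}g\|_{L^\infty}$, i.e.\ $L^\infty$-boundedness of Riesz transforms. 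The Caccioppoli route you sketch, passing through the conjugate function $G=y^{1-2\alpha}\partial_y U$, yields $|\partial_y\nabla_x U|\lesssim y^{2\alpha-2}\|(-\Delta)^\alpha g\|_{L^\infty}$, but integrating this in $y$ recovers the $y^{2\alpha-1}$ scaling for $\nabla_x U$ only when $\alpha<\tfrac12$. With the true behaviour $\|\nabla_x U(\cdot,y)\|_{L^\infty}\sim\|\nabla g\|_{L^\infty}$ for small $y$, your integral $\int_0^{y_*} y^{1-4\alpha+2\beta}\|\nabla U\|_{L^\infty}^2\,dy$ diverges whenever $0<\beta\le 2\alpha-1$, and even when it converges it is controlled by $\|\nabla g\|_{L^\infty}$ rather than the stated interpolant. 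The paper's \eqref{es422} is the identical claim, and its one-line justification via the explicit Poisson kernel has the same defect; so your sketch is no worse than the original, but in the range $\alpha\ge\tfrac12$ an additional device---for instance an integration by parts in $x$ moving the tangential derivative off $U$, or a square-function substitute for the pointwise gradient bound---is required to close the argument.
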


The previous theorems whenever $(-\Delta)^\alpha$ is  the standard fractional Laplacian with Fourier symbol  \ $|\xi|^{2\alpha}$  are  known as a version of a commutator estimate by Kenig, Ponce and Vega \cite{KPV} (see also the famous Kato-Ponce estimate \cite{KP}). {More precisely, the Kenig-Ponce-Vega estimate in $\mathbb R^d$  writes}
$$
||(-\Delta)^{\alpha}(gh)-g\,(-\Delta)^{\alpha} h- h \, (-\Delta)^{\alpha}g||_{L^p} \lesssim \|(-\Delta)^{\alpha_1} g\|_{L^{p_1}}\|(-\Delta)^{\alpha_2} h\|_{L^{p_2}}
$$
whenever $\alpha=\alpha_1+\alpha_2$, $0 < \alpha, \alpha_1,\alpha_2 <1/2$, $\frac1p=\frac{1}{p_1}+\frac{1}{p_2}$, $1<p,p_1,p_2<\infty$.

We would like also to point out that recently in \cite{dong} D. Li obtained
for the first time a general Leibniz rule for all fractional Laplacian
operators $(-\Delta)^\alpha$ including the difficult end-point cases,
solving in particular a conjecture by Kato and Ponce. Since we are interested for our applications in the $L^2$ estimate, and since the proof is very simple and straightforward in this case, we focus only on these particular exponents. It must be said that  the full estimates for the left-hand side in our main theorem in other Lebesgue spaces, which is important for other applications, can be obtained following the same type of strategy. However, in this latter case, instead of using simple integration by parts followed by H\"older and/or Hardy inequality, one needs to invoke Cald\'eron-Zygmund inequalities and interpolation theory for instance. {A full account on such estimates in the Euclidean space using Littlewood-Paley theory can be found in the mentioned paper by Li \cite{dong}. }

\begin{remark}
It will be clear from the proof that the case $\beta=0$ is allowed. This is actually a version of Theorem 1.2 in \cite{dong}. In this later, the author managed even, invoking Coifman-Meyer theorem for instance, to replace the $L^\infty$ norm by the weaker $BMO$ norm.
\end{remark}

The proof of Theorems \ref{thm-kapon1} and \ref{thm-kapon2} is based on the following idea: owing to \cite{caffaSilvestre,Stinga}, powers of the Laplacian can be realized as the Dirichlet-to-Neumann operator of a suitable extension. Such extensions are also valid for the spectral and restricted Laplacian, as described more precisely below.   This allows to use the elliptic PDE satisfied in the extended domain via integration by parts to get the desired cancellation. A unified theory for several non-trivial sharp commutator estimates based on this idea has also  been extensively studied by Lenzmann and Schikorra \cite{LenzSchi} (see also \cite{BSS} for a geometric version of such extension). Our proof is inspired also by the one of \cite[Lemma 2]{HungJuan}. {It should be noticed that one can find higher order Kato-Ponce and Leibniz commutator estimates in \cite{dong}. Since we make use of a PDE technique via an extension, the case of higher order operators $\alpha \geq 1/2$ needs to be treated differently. Even for $\alpha \geq 1$, it requires the extension formulae developed in \cite{changYang} and a rather technical adaptation of the present simple argument. This is under investigation by Dong Li and the second author \cite{dongYannick}. 

We would like to emphasize that in Theorem \ref{thm-kapon2} the norms are taken in the whole space $\mathbb R^d$. If one considers the {\sl restricted Laplacian}, one could ask if one can take the norms in the domain $\Omega$ where the functions $g$ and $h$ are supported: the analogue of the estimate \eqref{kato-ponce1} whenever norms are taken in $\Omega$ is an open problem and we conjecture that in general such an estimate does not hold. The reason why the latter seems to be unlikely to hold is  probably due to a combination of the tail effects of the restricted Laplacian and the boundary behaviour of the functions  (which relies crucially on the parameter $\alpha$). However, we conjecture that there should be a natural replacement for the Leibniz rule in this case. In the appendix we provide two estimates supporting our conjecture.

\medskip

We recall that the fractional Laplacian in Theorem  \ref{CaffaS} whenever the functions are supported in $\Omega \subset \mathbb R^d$ is not the so-called {\sl regional fractional Laplacian} which is defined by
\begin{equation}
(-\Delta)_{\Omega}^\alpha g:= \int_{\Omega} \frac{g(x)-g(y)}{|x-y|^{d+2\alpha}} dy ~~\text{in}~~\Omega.
\end{equation}

These three {\sl different} operators play an important role in the theory of L\'evy processes (see e.g. the recent papers \cite{KSV1,KSV2}). However, we are not aware of any kind of replacement of Theorem \ref{CaffaS} in the case of the censored Laplacian, and as a consequence we cannot run the same argument to prove a generalized Leibniz rule. Furthermore, the censored Laplacian is not a pseudo-differential operator in the classical sense so that standard techniques do not seem to apply successfully.

\subsection*{ Review of the extension results} We now state the two versions of the extension problem we need to implement the strategy of the proof of our main result.
We state first the main result in \cite{Stinga}, Theorem 1.1 (see also\cite{cabreTan,CDDS}), which allows us to deal with the Spectral Laplacian.

\begin{theorem}\label{ST}
Let $d \geq 1$ and $\alpha \in (0,1)$. Consider $g \in C^{\infty}_0(\Omega)$; then the function defined by
\begin{equation}\label{formulaBVP}
	u(x,y)=\frac{y^{2\alpha}}{4^\alpha\Gamma(\alpha)}\int_{0}^{\infty}e^{-\frac{y^2}{4t}}e^{t\Delta_{sp}}g(x)
\frac{dt}{t^{1+\alpha}}=\frac{1}{\Gamma(\alpha)}\int_{0}^{\infty}e^{-\frac{y^2}{4t}}e^{t\Delta_{sp}}
\left[(-\Delta)_{sp}^\alpha g\right](x)\frac{dt}{t^{1-\alpha}}
	\end{equation}

	solves in the weak sense the boundary value problem
	\begin{equation*}
\left\{
\begin{array}
{ll}%
\operatorname{div}(y^{1-2\alpha}\nabla u)=0~~&\text{in }\Omega\times(0,\infty),\\
u=0 &\text{on}
~~\partial\Omega\times (0,\infty),
\\
u(x,0)=g(x) &\text{in}
~~\Omega.
\\
\end{array}
\right.
\end{equation*}
Furthermore, one has pointwise
\begin{align*}
-\lim\limits_{y\to 0^+}y^{1-2\alpha} u_y(x,y)= (-\Delta)_{sp}^\alpha g(x).
\end{align*}
\end{theorem}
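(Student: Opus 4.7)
The plan is to prove the theorem by separation of variables along the spectral decomposition of the Dirichlet Laplacian. Since $g\in C^\infty_0(\Omega)$, its eigenfunction coefficients $g_j=\int_\Omega g\,\varphi_j\,dx$ decay faster than any power of $\lambda_j$, so the candidate solution can be rewritten as the absolutely convergent series
$$
u(x,y)=\sum_{j\geq 1} g_j\,\varphi_j(x)\,v_j(y),\qquad v_j(y)=\frac{y^{2\alpha}}{4^\alpha\Gamma(\alpha)}\int_0^\infty e^{-y^2/(4t)-\lambda_j t}\,\frac{dt}{t^{1+\alpha}},
$$
and every subsequent manipulation will be justified term by term on $\Omega\times(0,\infty)$.

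First I would verify that each profile $v_j$ satisfies the Bessel-type ODE
$$
v_j''(y)+\frac{1-2\alpha}{y}\,v_j'(y)-\lambda_j\,v_j(y)=0,\qquad y>0,
$$
by differentiating under the integral sign and integrating by parts once in $t$. A short computation gives
$$
\operatorname{div}\bigl(y^{1-2\alpha}\nabla[\varphi_j(x)v_j(y)]\bigr)=y^{1-2\alpha}\varphi_j(x)\Bigl[v_j''(y)+\tfrac{1-2\alpha}{y}v_j'(y)-\lambda_j v_j(y)\Bigr],
$$
so termwise summation yields $\operatorname{div}(y^{1-2\alpha}\nabla u)=0$ in $\Omega\times(0,\infty)$. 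The lateral Dirichlet condition is automatic from $\varphi_j|_{\partial\Omega}=0$, while the trace $u(x,0)=g(x)$ follows from the change of variable $s=y^2/(4t)$, which rewrites each $v_j$ as $\Gamma(\alpha)^{-1}\int_0^\infty e^{-s}e^{-\lambda_j y^2/(4s)}s^{\alpha-1}\,ds$ and, by dominated convergence both in $s$ and in $j$, converges to $g(x)$ as $y\to 0^+$.

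The main obstacle is identifying the weighted normal derivative with $(-\Delta)_{sp}^\alpha g$. Here I would work with the second integral representation of $u$, whose equivalence with the first is obtained by a single integration by parts in $t$ applied to each eigenmode (using $\partial_t e^{-y^2/(4t)}=\tfrac{y^2}{4t^2}e^{-y^2/(4t)}$). Differentiating that form in $y$ produces a factor $y$, and after the rescaling $s=y^2/(4t)$ the $y$-dependent prefactors collapse, leaving
$$
-y^{1-2\alpha}u_y(x,y)=c_\alpha\int_0^\infty e^{-s}\,e^{(y^2/(4s))\Delta_{sp}}\bigl[(-\Delta)_{sp}^\alpha g\bigr](x)\,s^{-\alpha}\,ds
$$
for an explicit constant $c_\alpha$ depending only on $\alpha$. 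Sending $y\to 0^+$ inside the integral gives $c_\alpha\,\Gamma(1-\alpha)\,(-\Delta)_{sp}^\alpha g(x)$, and the normalisation $1/(4^\alpha\Gamma(\alpha))$ built into the original definition of $u$ is precisely what forces $c_\alpha\,\Gamma(1-\alpha)=1$. The only technical point is controlling tails uniformly in $y$ so as to commute $\lim_{y\to 0^+}$ with both the integral in $t$ and the eigenfunction series, which is routine given the rapid decay of the $g_j$ and the uniform bound $0\leq v_j(y)\leq 1$ that follows directly from the heat-semigroup representation.
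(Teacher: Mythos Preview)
The paper does not prove this statement at all: Theorem~\ref{ST} is quoted verbatim as ``the main result in \cite{Stinga}, Theorem 1.1'' and is used as a black box in the proof of Theorem~\ref{thm-kapon1}. There is therefore no ``paper's own proof'' to compare against.

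That said, your argument is essentially the one given in the cited reference \cite{Stinga}: expand $g$ in Dirichlet eigenfunctions, reduce to the one-variable Bessel-type ODE for each profile $v_j$, and read off the weighted Neumann trace from the subordination formula after the change of variables $s=y^2/(4t)$. The structure is sound and the justifications you indicate (term-by-term differentiation using rapid decay of the $g_j$, dominated convergence for the trace and the Neumann limit) are the right ones. One small caution: your claim that the normalisation $1/(4^\alpha\Gamma(\alpha))$ forces $c_\alpha\,\Gamma(1-\alpha)=1$ exactly should be checked carefully; in the original Stinga--Torrea statement a dimensional constant $\dfrac{\Gamma(1-\alpha)}{4^{\alpha-1}\Gamma(\alpha)}$ (or an equivalent form) appears in the Dirichlet-to-Neumann identity, and the present paper is tacitly absorbing it since only inequalities with generic constants $C$ are needed downstream. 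This does not affect the method, only the bookkeeping.
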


\begin{remark}
Theorem \ref{ST} holds in a much more general setting than the one used here. In particular, it opens the way to get a wide class of commutator estimates in non-Euclidean frameworks (see \cite{BSS,BSS2}).
\end{remark}

Secondly, we state the one useful for the Fractional Laplacian, due to \cite{caffaSilvestre}.

\begin{theorem}\label{CaffaS}
Let $d \geq 1, \alpha \in (0,1)$ and $g \in C^{\infty}_0(\er^d)$. Denote by  $P_\alpha$ is the Poisson kernel of $\operatorname{div}(y^{1-2\alpha}\nabla \, \cdot)$ in $\mathbb R^{d+1}_+$, i.e.
\begin{equation*}
P_\alpha(x,y)=\frac{y^{2\alpha}}{(|x|^2+y^2)^{\frac{d+2\alpha}{2}}}.
\end{equation*}
 The function defined by
\begin{equation}\label{formulaBVP2}
	u(x,y)=\int_{\mathbb R^d}P_\alpha(x-t,y)g(t)\,dt
	\end{equation}
	solves in the weak sense the boundary value problem
	\begin{equation*}
\left\{
\begin{array}
{lll}%
\operatorname{div}(y^{1-2\alpha}\nabla u)=0~~&\text{in }\mathbb R_+^{d+1} ,\\
u(x,0)=g(x) &\text{on}
~~\partial \mathbb R^{d+1}_+ \sim \mathbb R^d.
\\
\end{array}
\right.
\end{equation*}
Furthermore, one has pointwise
\begin{align*}
-\lim\limits_{y\to 0^+}y^{1-2\alpha} u_y(x,y)= (-\Delta)^\alpha g(x).
\end{align*}
\end{theorem}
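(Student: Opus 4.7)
The plan is to prove Theorem \ref{CaffaS} by direct verification on the explicit Poisson-type kernel, combined with a Fourier-side computation for the Dirichlet-to-Neumann relation, which is much cleaner than in the spectral case because we can exploit the full translation invariance of $\mathbb{R}^d$.

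First I would check by direct computation that $P_\alpha(x,y)$ itself satisfies $\operatorname{div}(y^{1-2\alpha}\nabla P_\alpha)=0$ in $\mathbb{R}^{d+1}_+$. Writing $r^2 = |x|^2+y^2$, one computes $\partial_y P_\alpha$, $\partial_y^2 P_\alpha$, and $\Delta_x P_\alpha$ in closed form and the equation collapses to an algebraic identity in the ratio $|x|/y$. Because $u$ is a convolution in the $x$-variable, $u(\cdot,y) = P_\alpha(\cdot,y) \ast g$, differentiating under the integral sign (justified by the Schwartz decay of $P_\alpha(\cdot,y)$ for fixed $y>0$ and the compact support of $g$) transfers the equation to $u$. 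For the boundary trace $u(x,0)=g(x)$ I would establish that $\{P_\alpha(\cdot,y)\}_{y>0}$ is an approximate identity: the scaling $P_\alpha(x,y) = y^{-d}P_\alpha(x/y,1)$ together with the normalization $\int_{\mathbb{R}^d} P_\alpha(x,1)\,dx = 1$ (which fixes part of the constant $c_{\alpha,d}$) gives the required convergence as $y\to 0^+$ for every continuous bounded $g$.

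The main step is the Dirichlet-to-Neumann identity. Taking the Fourier transform in $x$, the PDE becomes the ODE
\begin{equation*}
\hat u_{yy}(\xi,y) + \frac{1-2\alpha}{y}\,\hat u_y(\xi,y) - |\xi|^2\hat u(\xi,y) = 0,
\end{equation*}
with $\hat u(\xi,0) = \hat g(\xi)$ and the requirement that $\hat u$ stays bounded as $y\to\infty$. The scaling of the equation suggests the substitution $\hat u(\xi,y) = \hat g(\xi)\,\Phi(|\xi|y)$, after which a further substitution $\Phi(v) = v^\alpha \psi(v)$ turns it into the modified Bessel equation
\begin{equation*}
v^2 \psi''(v) + v\,\psi'(v) - (v^2 + \alpha^2)\psi(v) = 0.
\end{equation*}
The unique bounded solution is a multiple of $K_\alpha$, so $\Phi(v) = c\,v^\alpha K_\alpha(v)$, with $c$ determined by $\Phi(0)=1$. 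Expanding $v^\alpha K_\alpha(v)$ near the origin via the $I_{\pm\alpha}$ series produces a leading correction of order $v^{2\alpha}$, hence $\Phi'(v) \sim -a_\alpha v^{2\alpha-1}$ as $v\to 0^+$ for an explicit $\Gamma$-function constant $a_\alpha$. Substituting back gives
\begin{equation*}
-\lim_{y\to 0^+} y^{1-2\alpha}\,\hat u_y(\xi,y) = a_\alpha\,|\xi|^{2\alpha}\,\hat g(\xi),
\end{equation*}
which is the Fourier symbol of $(-\Delta)^\alpha g$ once the overall normalization constant $c_{\alpha,d}$ in the definition of $P_\alpha$ is chosen to absorb $a_\alpha$.

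The main obstacle I expect is the bookkeeping of the normalization constants: checking that a single choice of $c_{\alpha,d}$ simultaneously yields both the mass-one property $\int P_\alpha(\cdot,1)=1$ (needed for $u(x,0)=g(x)$) and the correct Dirichlet-to-Neumann coefficient requires manipulating several $\Gamma$-function identities (reflection formula, $\Gamma(\alpha+1)=\alpha\Gamma(\alpha)$) to reconcile the explicit form of $P_\alpha$ with the asymptotic coefficient of $v^\alpha K_\alpha(v)$ near $v=0$. The conceptual content is entirely in identifying the Bessel ODE and its bounded solution; the rest is routine but unavoidable arithmetic carried out in \cite{caffaSilvestre}.
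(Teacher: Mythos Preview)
The paper does not actually prove Theorem~\ref{CaffaS}; it is quoted verbatim as a result ``due to \cite{caffaSilvestre}'' and used as a black box in the proof of Theorem~\ref{thm-kapon2}. So there is no proof in the paper to compare your proposal against.

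That said, your sketch is a faithful outline of the standard argument from \cite{caffaSilvestre}: direct verification that $P_\alpha$ solves the degenerate elliptic equation, the approximate-identity property for the boundary trace, and the Fourier/Bessel computation for the Dirichlet-to-Neumann map. Your identification of the bookkeeping of $\Gamma$-function constants as the main nuisance is accurate, and the reduction to the modified Bessel equation via $\Phi(v)=v^\alpha\psi(v)$ is exactly the clean way to do it. Nothing is missing conceptually; if you were asked to supply a proof, this would be the right one.
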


\begin{remark}
As noticed in \cite{Stinga}, by the very construction of $u$ in the previous theorems, the solution itself enjoys decay properties at $\infty$. In particular, this justifies the computations in the following sections.
\end{remark}

\medskip

\noindent { \bf Comments.} A classical technique to prove commutator estimates is to use Littlewood-Paley decomposition and Coifman-Meyer estimates combined with other deep harmonic analysis tools. We adopt here a different strategy based on the unified approach by the first two authors and then Lenzmann and Schikorra previously mentioned; that approach does not require at all in our case any deep result on boundedness of multipliers. It allows in particular to get {\sl sharp} estimates in much more general frameworks, at least for operators given by powers of the second order operators, than the paraproduct technique. Indeed, a major drawback of using paraproducts is their intrinsic Euclidean nature. However, Bernicot and Frey (see e.g. \cite{frey} and references therein) developed in the last decade a theory of para-differential calculus on geometric settings where the standard Fourier analysis is not available. However, for many PDE applications, such a theory is rather involved and, as the proof of \cite[Lemma 2]{HungJuan} and the unified approach developed by Lenzmann and Schikorra (and ours here) shows, one can get  in many cases  a much simpler approach.

\vspace{-0.3cm}

\subsubsection*{Notations} in $\er^{d+1}=\left \{ (x,y),\,\,x \in \er^d,\,\, y\in \er \right \}$, we will denote the classical differential operators $\nabla=(\nabla_x,\partial_y), \operatorname{div}=\operatorname{div}_x+\partial_y$ and $\Delta=\Delta_x +\partial_{yy}$ with the obvious meanings.

\section{Proof of Theorem \ref{thm-kapon1}: the case of the spectral Laplacian}

\subsection*{Harmonic extensions } (i)
We define the following three quantities which play the important role in the proof of the commutator estimate. Let $U,V,W$ be the solutions  given by Theorem \ref{ST} of the boundary value problems:
\begin{equation}\label{kato-pon-eq1-a}
\left\{
\begin{array}
{ll}%
\operatorname{div}(y^{1-2\alpha}\nabla U)=0~~&\text{in }\Omega\times(0,\infty),\\
U=0 &\text{on}
~~\partial\Omega\times (0,\infty),
\\
U(x,0)=g(x) &\text{in}
~~\Omega,
\\
\end{array}
\right.
\end{equation}
\begin{equation}\label{kato-pon-eq2-a}
\left\{
\begin{array}
{ll}%
\operatorname{div}(y^{1-2\alpha}\nabla V)=0~~&\text{in }\Omega\times(0,\infty),\\
V=0 &\text{on}
~~\partial\Omega\times (0,\infty),
\\
V(x,0)=h(x) &\text{in}
~~\Omega,
\\
\end{array}
\right.
\end{equation}
\begin{equation}\label{kato-pon-eq3-a}
\left\{
\begin{array}
{ll}%
\operatorname{div}(y^{1-2\alpha}\nabla W)=0~~&\text{in }\Omega\times(0,\infty),\\
W=0 &\text{on}
~~\partial\Omega\times (0,\infty),
\\
W(x,0)=g(x)h(x) &\text{in}
~~\Omega.
\\
\end{array}
\right.
\end{equation}
We introduce the new function $Z=W-UV$. By construction we have
\begin{align*}
-\lim\limits_{y\to 0^+} y^{1-2\alpha}Z_y(x,y)=\left[(-\Delta)_{sp}^{\alpha}(gh)(x)-g(x)(-\Delta)_{sp}^{\alpha}(h)(x)-h(x)(-\Delta)_{sp}^{\alpha}(g)(x)\right]\,.
\end{align*}
$Z$ satisfies a state equation and boundary data:
\begin{equation}\label{kato-pon-eq5-a}
\left\{
\begin{array}
{ll}%
\operatorname{div}(y^{1-2\alpha}\nabla Z)=-2y^{1-2\alpha}\nabla U \cdot\nabla V~~&\text{in }\Omega\times(0,\infty),\\
Z=0 &\text{on}
~~\partial\Omega\times (0,\infty),
\\
Z(x,0)=0 &\text{in}
~~\Omega.
\\
\end{array}
\right.
\end{equation}
Multiplying \eqref{kato-pon-eq5-a} by $\varphi=y^{-2\alpha}Z$ and integrating by parts leads
		\begin{align}\label{Z1}
	&\int_{0}^{\infty}\int_\Omega y^{1-2\alpha}Z_y\partial_y(y^{-2\alpha}Z) dxdy+\int_{\Omega} \lim\limits_{y\to 0}\left[ y^{1-2\alpha}Z_y y^{-2\alpha}Z\right] dx\\&~~+\int_{0}^{\infty}\int_\Omega y|\nabla_x (y^{-2\alpha} Z)|^2dxdy=2\int_{0}^{\infty}\int_\Omega y^{1-2\alpha}\nabla U \cdot \nabla V  y^{-2\alpha}Z dxdy.\nonumber
	\end{align}
	Since $y^{1-2\alpha}Z_y= y \partial_y (y^{-2\alpha}Z)+2\alpha y^{-2\alpha} Z$
	\begin{align}\nonumber
	\int_{0}^{\infty}\int_\Omega y^{1-2\alpha}Z_y\partial_y(y^{-2\alpha}Z) dxdy&=\int_{0}^{\infty}\int_\Omega y|\partial_y (y^{-2\alpha}Z)|^2dxdy+\alpha \int_{0}^{\infty}\int_\Omega  \partial_y\left[(y^{-2\alpha}Z)^2\right]\\&
	=\int_{0}^{\infty}\int_\Omega y|\partial_y (y^{-2\alpha}Z)|^2dxdy-\alpha \int_\Omega \lim\limits_{y\to 0}|y^{-2\alpha}Z|^2 dx.\label{Z2}
	\end{align}
	Thanks to $Z(x,0)=0$, one has
$$
	\lim\limits_{y\to 0} y^{-2\alpha}Z= \frac1{2\alpha} \nc \lim\limits_{y\to 0} y^{1-2\alpha}Z_y~~\text{a.e  in}~~\Omega.
$$
Combining this with \eqref{Z2} and  \eqref{Z1} we arrive at 	
	\begin{align}
	&\frac{1}{4\alpha}\int_{\Omega} |\lim\limits_{y\to 0} y^{1-2\alpha}Z_y(x,y)|^2 dx+\int_{0}^{\infty}\int_\Omega y|\nabla_x (y^{-2\alpha} Z)|^2dxdy\\&~~~=2\int_{0}^{\infty}\int_\Omega y^{1-2\alpha}\nabla U \cdot \nabla V  y^{-2\alpha}Z dxdy.\nonumber
	\end{align}
	
\noindent Our goal is to estimate the first term in the left-hand side in order to get the commutator estimate of Theorem \ref{thm-kapon1}. Discarding the second term  and using H\"older's inequality on the last term, we get
	\begin{align*}
	&\int_{\Omega} |\lim\limits_{y\to 0} y^{1-2\alpha}Z_y(x,y)|^2 dx\\&\leq C  ||y^{1-2\alpha}|\nabla U|||_{L^\infty(\Omega\times (0,\infty))} \int_{0}^{\infty}\int_\Omega  |\nabla V|  y^{-2\alpha}|Z| dxdy
	\\&\leq C \left(\int_{0}^{\infty}\int_\Omega  y^{1-2\beta}|\nabla V|^2 dxdy\right)^{1/2}  \left(\int_{0}^{\infty}\int_\Omega y^{-1-2(2\alpha-\beta)}| Z|^2 dxdy\right)^{1/2}\\&~~~
	 \times ||y^{1-2\alpha}|\nabla U|\,||_{L^\infty( (0,\infty)\times\Omega)}
	\end{align*}
for $0<\beta\leq \alpha<\frac{1}{2}$.   At this point, it is enough to show the following three inequalities:
	\begin{align}\label{es2}
	\int_{0}^{\infty}\int_\Omega y^{1-2\beta}|\nabla V|^2dxdy\leq C \int_\Omega |(-\Delta)_{sp}^{\frac{\beta}{2}} h|^2dx,
	\end{align}
	
	\begin{align}\label{es43}
	\int_{0}^{\infty}\int_\Omega y^{-1-2\alpha}| Z|^2 dxdy\leq C ||(-\Delta)_{sp}^{\frac{\beta}{2}}(h)||_{L^2(\Omega)}^2 ||g||_{L^\infty(\Omega)}^{\frac{2\beta}{\alpha}} ||(-\Delta)_{sp}^\alpha (g)||_{L^\infty(\Omega)}^{\frac{2(\alpha-\beta)}{\alpha}},
	\end{align}
	and finally
	\begin{align}\label{es42}||y^{1-2\alpha}|\nabla U|\,||_{L^\infty(\Omega\times (0,\infty))}\leq C||(-\Delta)_{sp}^\alpha (g)||_{L^\infty(\Omega)}.
	\end{align}
	
\medskip

\noindent \textbf{Proof of \eqref{es2}:} Let $\phi$ be the unique solution of
	\begin{equation}\label{kato-pon-eq4}
\left\{
\begin{array}
{ll}%
\operatorname{div}(y^{1-2\beta}\nabla \phi)=0~~&\text{in }\Omega\times(0,\infty),\\
\phi=0 &\text{on}
~~\partial\Omega\times (0,\infty),
\\
\phi(x,0)=h(x) &\text{in}
~~\Omega.
\\
\end{array}
\right.
\end{equation}
It is enough to check that
\begin{align}\label{es1}
\int_{0}^{\infty}\int_\Omega y^{1-2\beta}|\nabla V|^2\leq C\int_{0}^{\infty}\int_\Omega y^{1-2\beta}|\nabla\phi|^2.
\end{align}
since {$\int_\Omega |(-\Delta)_{sp}^{\frac{\beta}{2}} h|^2dx= c\int_{0}^{\infty}\int_\Omega y^{1-2\beta}|\nabla\phi|^2$ by the very definition of the extension and for a universal constant $c$ (see \cite{Stinga})} . Since 
$$\operatorname{div}(y^{1-2\alpha}\nabla \phi)=2(\beta-\alpha)y^{-2\alpha}\phi_y,$$
 the function $\overline{\phi}=\phi-V$ satisfies
	\begin{equation}\label{kato-pon-eq6}
\left\{
\begin{array}
{ll}%
\operatorname{div}(y^{1-2\alpha}\nabla \overline{\phi})=2(\beta-\alpha)y^{-2\alpha}\phi_y~~&\text{in }\Omega\times(0,\infty),\\
\overline \phi=0 &\text{on}
~~\partial\Omega\times (0,\infty),
\\
\overline{\phi}(x,0)=0 &\text{in}
~~\Omega.
\\
\end{array}
\right.
\end{equation}

Choosing $y^{2(\alpha-\beta)}\overline{\phi}$ as a test function for the above equation, one has
\begin{align*}
\int_{0}^{\infty}\int_\Omega y^{1-2\beta}|\nabla\overline{\phi}|^2+2(\alpha-\beta)y^{-2\beta}\overline{\phi}_y \overline{\phi} =2(\alpha-\beta)\int_{0}^{\infty}\int_\Omega y^{-2\beta}\phi_y \overline{\phi}.
\end{align*}
Notice that 
\begin{align*}
2(\alpha-\beta) \int_{0}^{\infty}\int_\Omega y^{-2\beta}\overline{\phi}_y \overline{\phi}dxdy =2(\alpha-\beta)\beta\int_{0}^{\infty}\int_\Omega y^{-1-2\beta}|\overline{\phi}|^2\geq 0.
\end{align*}
Furthermore, using H\"older's and Young's inequality, one gets for some $\varepsilon >0$ to be chosen 
\begin{align*}
& \int_{0}^{\infty}\int_\Omega y^{1-2\beta}|\nabla\overline{\phi}|^2+ 2(\alpha-\beta)\beta \int_{0}^{\infty}\int_\Omega y^{-1-2\beta}|\bar \phi|^2 \\&\leq  (\alpha-\beta) \Big ( \varepsilon  \int_{0}^{\infty}\int_\Omega y^{-1-2\beta}|\bar \phi|^2 + \frac{1}{\varepsilon} \int_{0}^{\infty}\int_\Omega y^{1-2\beta}|\nabla \phi|^2 \Big )  .
\end{align*}

Choosing $\varepsilon$ small enough and discarding the nonnegative term in the LHS, this implies that 
 
\begin{align*}
\int_{0}^{\infty}\int_\Omega y^{1-2\beta}|\nabla\overline{\phi}|^2  \leq C \int_{0}^{\infty}\int_\Omega y^{1-2\beta}|\nabla \phi|^2dxdy,
\end{align*}
which implies \eqref{es1} recalling that $\bar \phi=\phi-V$.

\medskip

\noindent \textbf{Proof of \eqref{es42}:} 	By standard regularity theory for the heat kernel ( see e.g. \cite{davies}):  for all $\phi\in L^\infty$ and all $x \in \Omega$
	\begin{align*}
	|\nabla_x e^{t\Delta_{sp}}\phi(x)|\leq C\frac{1}{\sqrt{t}}\min\left\{1,\frac{1}{t^{\frac{d}{2}}}\right\}||\phi||_{L^\infty(\Omega)},~
	|e^{t\Delta_{sp}}\phi(x)|\leq C\min\left\{1,\frac{1}{t^{\frac{d}{2}}}\right\}||\phi||_{L^\infty(\Omega)}.
	\end{align*}

	Using  these estimates in  formulas  \eqref{formulaBVP} (and obvious changes of variables), one gets 
	\begin{align*}
		&	|y|^{1-2\alpha}|\nabla U(x,y)|\leq C|y|^{1-2\alpha}\int_{0}^{\infty}\left(\frac{1}{\sqrt{t}}+\frac{|y|}{t}\right)e^{-\frac{y^2}{4t}}\frac{dt}{t^{1-\alpha}}||(-\Delta)_{sp}^\alpha g||_{L^\infty(\Omega)},\\&
				|y||\nabla U(x,y)|\leq C|y|^{1+2\alpha}\int_{0}^{\infty}\left(\frac{1}{|y|}+\frac{|y|}{t}+\frac{1}{\sqrt{t}}\right)e^{-\frac{y^2}{4t}}
		\frac{dt}{t^{1+\alpha}}||g||_{L^\infty(\Omega)},
	\end{align*}
	which implies \eqref{es42} and 
	\begin{align}\label{es39}
		||y|\nabla U|\,||_{L^\infty(\Omega\times (0,\infty))}\leq C||g||_{L^\infty(\Omega)}.
	\end{align}

\medskip

\noindent 	\textbf{Proof of \eqref{es43}:} We choose  $\varphi=y^{-2(\alpha-\beta)}Z$ as a test function for  \eqref{kato-pon-eq5-a}
	\begin{align}\label{es3}
	&\int_{0}^{\infty}\int_\Omega y^{1-2(2\alpha-\beta)}|\nabla Z|^2dxdy-2(\alpha-\beta)	\int_{0}^{\infty}\int_\Omega y^{-2(2\alpha-\beta)} Z_yZ dxdy\\&=2\int_{0}^{\infty}\int_\Omega y^{1-2(2\alpha-\beta)}\nabla  U \cdot \nabla V Z dxdy.
	\end{align}
	Using the following Hardy's inequality (see \cite{zygmund}, p.20)
	\begin{align}\label{es4}
	\int_{0}^{\infty} y^{1-2(2\alpha-\beta)}|Z_y|^2dy\geq (2\alpha-\beta)^2 \int_{0}^{\infty}y^{-1-2(2\alpha-\beta)}|Z|^2dy,
	\end{align}and the fact that
	\begin{align*}
	-2(\alpha-\beta) 	\int_{0}^{\infty}\int_\Omega y^{-2(2\alpha-\beta)} Z_yZ dxdy=-2(\alpha-\beta)(2\alpha-\beta) \int_{0}^{\infty}\int_\Omega y^{-1-2(2\alpha-\beta)}|Z|^2dxdy,
	\end{align*} yields
	\begin{align*}
	&\frac{\beta}{2\alpha-\beta}\int_{0}^{\infty}\int_\Omega y^{1-2(2\alpha-\beta)}|\nabla Z|^2dxdy\\&~~\leq LHS\eqref{es3}= 2\int_{0}^{\infty}\int_\Omega y^{1-2(2\alpha-\beta)}\nabla U  \cdot \nabla V Z dxdy.
	\end{align*}
	Using H\"older's inequality (together with Young's inequality) and \eqref{es4}, we get
	\begin{align*}
	&\int_{0}^{\infty}\int_\Omega y^{1-2(2\alpha-\beta)}|\nabla Z|^2 \leq C\int_{0}^{\infty}\int_\Omega y^{3-2(2\alpha-\beta)} |\nabla V|^2 |\nabla U|^2
	\\&\leq C ||y^{1-2(\alpha-\beta)}|\nabla U|\,||_{L^\infty(\Omega\times(0,\infty))}^2 \int_{0}^{\infty}\int_\Omega y^{1-2\beta} |\nabla V|^2
	\\&\overset{\eqref{es2}}\leq C ||(-\Delta)^{\frac{\beta}{2}}(h)||_{L^2(\Omega)}^2 ||y|\nabla U|||_{L^\infty(\Omega\times(0,\infty))}^{\frac{2\beta}{\alpha}}||y^{1-2\alpha}|\nabla U|||_{L^\infty(\Omega\times(0,\infty))}^{\frac{2(\alpha-\beta)}{\alpha}}\\&\overset{\eqref{es42},\eqref{es39}}\leq C ||(-\Delta)^{\frac{\beta}{2}}(h)||_{L^2(\Omega)}^2 ||g||_{L^\infty(\Omega)}^{\frac{2\beta}{\alpha}} ||(-\Delta)^\alpha (g)||_{L^\infty(\Omega)}^{\frac{2(\alpha-\beta)}{\alpha}},
	\end{align*}
	which implies \eqref{es43}. The proof is complete. \qed

	
	\section{Proof of Theorem \ref{thm-kapon2}}

	The case of the Fractional Laplacian  in $\mathbb R^d$  is a straightforward modification of the previous arguments. We sketch it here for the reader's convenience. First we introduce the appropriate extensions, modified to suit our operator.
	 Let $U,V,W$ be the solutions  given by Theorem \ref{CaffaS} of the boundary value problems:
\begin{equation}\label{kato-pon-eq1-a2}
\left\{
\begin{array}
{ll}%
\operatorname{div}(y^{1-2\alpha}\nabla U)=0~~&\text{in }\mathbb R^d \times(0,\infty),\\
U(x,0)=g(x) &\text{in}
~~\mathbb R^d,
\\
\end{array}
\right.
\end{equation}
\begin{equation}\label{kato-pon-eq2-a2}
\left\{
\begin{array}
{ll}%
\operatorname{div}(y^{1-2\alpha}\nabla V)=0~~&\text{in }\mathbb R^d\times(0,\infty),\\
V(x,0)=h(x) &\text{in}
~~\mathbb R^d,\end{array}
\right.
\end{equation}
\begin{equation}\label{kato-pon-eq3-a2}
\left\{
\begin{array}
{ll}%
\operatorname{div}(y^{1-2\alpha}\nabla W)=0~~&\text{in }\mathbb R^d \times(0,\infty),\\
W(x,0)=g(x)h(x) &\text{in}
~~\mathbb R^d,
\end{array}
\right.
\end{equation}
Defining as before  $Z=W-UV$, we have by construction
\begin{align*}
-\lim\limits_{y\to 0^+} y^{1-2\alpha}Z_y(x,y)=\left[(-\Delta)^{\alpha}(gh)(x)-g(x)(-\Delta)^{\alpha}(h)(x)-h(x)(-\Delta)^{\alpha}(g)(x)\right]
\end{align*}
and $Z$ solves the extended problem
\begin{equation}\label{kato-pon-eq5-a2}
\left\{
\begin{array}
{ll}%
\operatorname{div}(y^{1-2\alpha}\nabla Z))=-2y^{1-2\alpha}\nabla U \cdot\nabla V~~&\text{in }\mathbb R^d\times(0,\infty),\\
Z=0 &\text{on}
~~\mathbb R^d,
\end{array}
\right.
\end{equation}
Multiplying \eqref{kato-pon-eq5-a2} by  check this  $\varphi=y^{-2\alpha}Z$  and integrating by parts leads

	\begin{align*}
	&\frac{1}{4\alpha}\int_{\mathbb R^d} |\lim\limits_{y\to 0} y^{1-2\alpha}Z_y(x,y)|^2 dx+\int_{0}^{\infty}\int_{\mathbb R^d} y|\nabla (y^{-2\alpha} Z)|^2dxdy\\&~~~=2\int_{0}^{\infty}\int_{\mathbb R^d} y^{1-2\alpha}\nabla U \cdot \nabla V  y^{-2\alpha}Z dxdy.
	\end{align*}
	 Using H\"older's inequality as before,
	\begin{align*}
	&\int_{\mathbb R^d} |\lim\limits_{y\to 0} y^{1-2\alpha}Z_y(x,y)|^2 dx\\&\leq C  ||y^{1-2\alpha}|\nabla U|||_{L^\infty(\mathbb R^d \times (0,\infty))} \int_{0}^{\infty}\int_{\mathbb R^d}  |\nabla V|  y^{-2\alpha}|Z| dxdy
	\\&\leq C \left(\int_{0}^{\infty}\int_{\mathbb R^d}  y^{1-2\beta}|\nabla V|^2 dxdy\right)^{1/2}  \left(\int_{0}^{\infty}\int_{\mathbb R^d} y^{-1-2(2\alpha-\beta)}| Z|^2 dxdy\right)^{1/2}\\&~~~
	 \times ||y^{1-2\alpha}|\nabla U|\,||_{L^\infty( (0,\infty)\times\mathbb R^d)}.
	\end{align*}
	We just need to show then
	\begin{align}\label{es22}
	\int_{0}^{\infty}\int_{\mathbb R^d} y^{1-2\beta}|\nabla V|^2dxdy\leq C \int_{\mathbb R^d} |(-\Delta)^{\frac{\beta}{2}} h|^2dx,
	\end{align}
	
	\begin{align}\label{es432}
	\int_{0}^{\infty}\int_{\mathbb R^d} y^{-1-2\alpha}| Z|^2 dxdy\leq C ||(-\Delta)^{\frac{\beta}{2}}(h)||_{L^2(\mathbb R^d)}^2 ||g||_{L^\infty(\mathbb R^d)}^{\frac{2\beta}{\alpha}} ||(-\Delta)^\alpha (g)||_{L^\infty(\mathbb R^d)}^{\frac{2(\alpha-\beta)}{\alpha}},
	\end{align}
	and
	\begin{align}\label{es422}||y^{1-2\alpha}|\nabla U|\,||_{L^\infty(\mathbb R^d\times (0,\infty))}\leq C||(-\Delta)^\alpha (g)||_{L^\infty(\mathbb R^d)}.
	\end{align}

Compared to the previous section, the only point which is different is the one to prove \eqref{es422}. Here we just invoke the explicit expression of the Poisson kernel $P_\alpha$ which is (up to a universal normalizing constant)
\begin{equation*}
P_\alpha(x,y)=\frac{y^{2\alpha}}{(|x|^2+y^2)^{\frac{d+2\alpha}{2}}}
\end{equation*}
and by construction,  the solution $U$ which is the convolution of $P_\alpha$ with $g$ satisfies the desired estimates.

\section*{Appendix}

In this appendix, we provide two results supporting our conjecture on the failure of the usual form of the Leibniz rule in the case of the restricted Laplacian. Our purpose is to relate a weighted (by a suitable power of the distance function) $L^p$ norm for $p=1,2$ of the function to the $L^p$ norm of its fractional Laplacian. We would like to make in particular three comments:

\begin{itemize}
\item by the very definition of the restricted Laplacian, since the functions are supported on $\Omega$, the Leibniz rule reduces to estimate the integrals
$$
 \int_{\er^d} \frac{(f(x)-f(y))\,( g(x)-g(y) )}{|x-y|^{d+2\alpha}}\,dy
$$
and since we are interested in estimating $L^2$ norms in $\Omega$, one is led to consider quantities of the type
$$
\int_{\Omega \times \Omega} \frac{(f(x)-f(y))\,( g(x)-g(y) )}{|x-y|^{d+2\alpha}}\,dx\,dy,\,\,\,\,\,\int_{\Omega \times \Omega^c} \frac{(f(x)-f(y))\,( g(x)-g(y) )}{|x-y|^{d+2\alpha}}\,dx\,dy
$$

\item it is by now well-known that smooth functions that are compactly supported in $\Omega$ and have finite $H^\alpha$ semi-norm, behave like $\dist(x,\partial \Omega)^{\alpha}$  close to the boundary of $\Omega$.

\item finally, notice that there is two different ways to define a semi-norm (even in $\er^d$) in $\dot W^{\alpha,p}(\er^d)$, namely
$$
\int_{\er^d \times \er^d} \frac{|f(x)-f(y)|^p}{|x-y|^{d+p\alpha}}\,dx\,dy\,\,\,\,\,\text{and}\,\,\,\,\,\int_{\er^d} |(-\Delta)^{\alpha/2} f(x)|^p\,dx.
$$
In the case of the whole space $\er^d$ and $p=2$, these latter norms are equivalent. Actually, according to \cite{stein}, depending on $p$, these spaces are ordered for every $\alpha \in (0,1)$ in bounded domains and they are still equivalent for $p=2$.
\end{itemize}

According to the previous remarks, if one seeks for a counter-example, one would need to understand how the $L^2$ norm in $\er^d$ of the commutator behaves with respect to its $L^2$ norm in $\Omega$. The following computations show that the boundary behaviour plays a crucial role.

Let $\alpha\in (0,1/2)$. Let $u_\varepsilon\in C_c^\infty(B_1(0))$ be a cut-off function such that $u_\varepsilon=1$ in $B_{1-2\varepsilon}$ , $u_\varepsilon=0$ in $B_{1-\varepsilon}^c$ and $|\nabla u_\varepsilon|\leq C\varepsilon$, $0\leq u_\varepsilon\leq 1$. We easily get  first

\begin{equation}
\int_{B_1}\frac{u_\varepsilon(x)^2}{(1-|x|^2)^{2\alpha}} dx\sim 1, ~~\forall \varepsilon\in (0,1/10).
\end{equation}
and for any $0<\alpha<\alpha_0<1/2$
\begin{equation}\label{appZ1}
\int_{B_1}\int_{B_1} \frac{|u_\varepsilon(x)-u_\varepsilon(y)|^2}{|x-y|^{d+2\alpha}} dxdy\lesssim_{s_0} \varepsilon^{1-2\alpha_0}.
\end{equation}
Indeed,
\begin{align*}
&\int_{B_1}\int_{B_1} \frac{|u_\varepsilon(x)-u_\varepsilon(y)|^2}{|x-y|^{d+2\alpha}} dxdy\\&=2\int_{B_1\backslash B_{1-3\varepsilon}}\int_{B_1\backslash B_{1-3\varepsilon}}\frac{|u_\varepsilon(x)-u_\varepsilon(y)|^2}{|x-y|^{d+2\alpha}} dxdy+2\int_{B_1\backslash B_{1-2\varepsilon}}\int_{ B_{1-3\varepsilon}}\frac{|1-u_\varepsilon(y)|^2}{|x-y|^{d+2\alpha}} dxdy\\&\lesssim \varepsilon^{-2\alpha_0}
\int_{B_1\backslash B_{1-3\varepsilon}}\int_{B_1\backslash B_{1-3\varepsilon}}\frac{1}{|x-y|^{d-2(\alpha_0-\alpha)}} dxdy+\int_{B_1\backslash B_{1-2\varepsilon}}\int_{ B_{1-3\varepsilon}}\frac{1}{|x-y|^{d+2\alpha}} dxdy
\\&\lesssim \varepsilon^{-2\alpha_0}
\int_{B_1\backslash B_{1-3\varepsilon}} dy+\int_{B_1\backslash B_{1-2\varepsilon}}\varepsilon^{-2\alpha}dy\\&\lesssim \varepsilon^{1-2\alpha_0}+\varepsilon^{1-2\alpha}\lesssim\varepsilon^{1-2\alpha_0}.
\end{align*}
On the other hand, for any $x\in B_1$
\begin{equation}\label{appZ3}
||(-\Delta)^{\alpha/2}u_\varepsilon||_{L^2(B_1)}\sim 1.
\end{equation}
Indeed, for $x\in B_1$
\begin{align*}
|(-\Delta)^{\alpha/2}u_\varepsilon(x)-\int_{B_1}\frac{u_\varepsilon(x)-u_\varepsilon(y)}{|x-y|^{d+\alpha}} dy|&=|\int_{B_1^c}\frac{u_\varepsilon(x)}{|x-y|^{d+\alpha}} dy|\\&\lesssim \frac{|u_\varepsilon(x)|}{(1-|x|^2)^\alpha}.
\end{align*}
So,
\begin{align}\nonumber
\int_{B_1}|(-\Delta)^{\alpha/2}u_\varepsilon(x)-\int_{B_1}\frac{u_\varepsilon(x)-u_\varepsilon(y)}{|x-y|^{d+\alpha}} dy|^2dx&=|\int_{B_1^c}\frac{u_\varepsilon(x)}{|x-y|^{d+\alpha}} dy|\\&\lesssim \int_{B_1}\frac{|u_\varepsilon(x)|^2}{(1-|x|^2)^{2\alpha}} dx\sim 1.\label{appZ2}
\end{align}
Moreover,  for $\alpha<\alpha_1<\alpha_2<1/2$
\begin{align*}
&\int_{B_1}|\int_{B_1}\frac{u_\varepsilon(x)-u_\varepsilon(y)}{|x-y|^{d+\alpha}} dy|^2dx
\\& \lesssim_{\alpha_1} \int_{B_1}\int_{B_1}\frac{|u_\varepsilon(x)-u_\varepsilon(y)|^2}{|x-y|^{d+2\alpha_1}} dydx
\\&\overset{\eqref{Z1}}\lesssim_{\alpha_1,\alpha_2} \varepsilon^{1-2\alpha_2}.
\end{align*}
Combining this with \eqref{appZ2} yields \eqref{appZ3}. As a consequence, we have
\begin{equation*}
\int_{B_1}\frac{u_\varepsilon(x)^2}{(1-|x|^2)^{2\alpha}} dx\sim ||(-\Delta)^{\alpha/2}u_\varepsilon||_{L^2(B_1)}^2\sim 1.
\end{equation*}
but
\begin{equation*}
\int_{B_1}\int_{B_1} \frac{|u_\varepsilon(x)-u_\varepsilon(y)|^2}{|x-y|^{d+2\alpha}} dxdy\to 0 ~~\text{as}~~ \varepsilon\to 0.
\end{equation*}

This is an explicit example that in a bounded domain for $\alpha <1/2$, the Hardy inequality in $L^2$ does not hold.

\vspace{0.5cm}
However, the analogous result in $L^1$ does hold:
\begin{theorem}Let $u\in C^\infty_c(\Omega)$ be a solution to
	\begin{equation}
\int_{\mathbb{R}^d}\frac{u(x)-u(y)}{|x-y|^{d+2\alpha}} dy =f(x)
	\end{equation}
	in $\Omega$, where $\Omega$ is smooth bounded domain.  Then,
	\begin{equation}\label{Z5}
	\int_{\Omega}\frac{|u(x)|}{dist(x,\partial\Omega)^{2\alpha}} dx+||u||_{W^{2\alpha-\delta,1}(\mathbb{R}^d)}\lesssim_{\Omega,\delta} ||f||_{L^1(\Omega)}
	\end{equation}
	for any $\delta\in (0,\alpha/4)$
\end{theorem}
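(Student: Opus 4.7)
The plan is to first establish the weighted $L^{1}$ estimate through the Dirichlet Green's function of the restricted fractional Laplacian on $\Omega$, and then upgrade the resulting interior $L^{1}$ control of $(-\Delta)^{\alpha}u$ to global $L^{1}(\mathbb{R}^{d})$ control, from which the Sobolev bound will follow by Besov embedding.

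Let $G(x,y)$ denote the Green's function of the restricted fractional Laplacian on $\Omega$, for which sharp two-sided estimates from the potential theory of stable processes (Chen--Song, Kulczycki) give
\[
G(x,y)\asymp|x-y|^{2\alpha-d}\min\!\Bigl(1,\tfrac{d(x)^{\alpha}d(y)^{\alpha}}{|x-y|^{2\alpha}}\Bigr),\qquad d(\cdot):=\dist(\cdot,\partial\Omega).
\]
Since $u=\int_{\Omega}G(\cdot,y)f(y)\,dy$, Fubini reduces the weighted bound to the uniform kernel estimate $\sup_{y\in\Omega}\int_{\Omega}G(x,y)\,d(x)^{-2\alpha}\,dx\leq C$. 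I would verify this by splitting into the diagonal zone $|x-y|\leq d(y)/2$ (where $d(x)\sim d(y)$ and $G\lesssim|x-y|^{2\alpha-d}$ gives an $O(1)$ contribution) and the off-diagonal zone, in which the boundary-corrected factor $d(x)^{\alpha}$ in $G$ supplies enough decay and, after a boundary parametrization $x=(\rho,\sigma)$ with $\rho=d(x)$, reduces the off-diagonal contribution to a convergent integral of $\rho^{-\alpha}$ against $|x-y|^{-d}$, finite uniformly in $y$ since $\alpha<1$.

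To obtain the fractional Sobolev estimate I would first upgrade to a global $L^{1}$ bound on $(-\Delta)^{\alpha}u$: for $x\in\Omega^{c}$, $u(x)=0$ yields
\[
(-\Delta)^{\alpha}u(x)=-c_{d,\alpha}\int_{\Omega}\frac{u(y)}{|x-y|^{d+2\alpha}}\,dy,
\]
and the elementary bound $\int_{\Omega^{c}}|x-y|^{-d-2\alpha}\,dx\lesssim d(y)^{-2\alpha}$ combined with the weighted $L^{1}$ estimate yields $\|(-\Delta)^{\alpha}u\|_{L^{1}(\mathbb{R}^{d})}\lesssim\|f\|_{L^{1}(\Omega)}$. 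Writing then $u=I_{2\alpha}F$ with $F=(-\Delta)^{\alpha}u$ and applying a Littlewood--Paley decomposition gives $\|\Delta_{j}u\|_{L^{1}}\lesssim 2^{-2\alpha j}\|F\|_{L^{1}}$, hence $u\in B^{2\alpha}_{1,\infty}(\mathbb{R}^{d})$ with norm controlled by $\|f\|_{L^{1}(\Omega)}$; the elementary Besov embedding $B^{2\alpha}_{1,\infty}\hookrightarrow B^{2\alpha-\delta}_{1,1}=W^{2\alpha-\delta,1}$, valid for any $\delta>0$, closes the argument.

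The main technical obstacle is the uniform kernel estimate $\sup_{y}\int_{\Omega}G(x,y)/d(x)^{2\alpha}\,dx\leq C$: the naive interior bound $|x-y|^{2\alpha-d}$ is insufficient near the boundary, and one must carefully exploit the boundary-corrected Green's function factor $d(x)^{\alpha}$. The $\delta$-loss in the Sobolev exponent is structurally unavoidable, reflecting the failure of $L^{1}$ endpoint Calder\'on--Zygmund estimates, and is fully consistent with the paper's conjecture that the standard $L^{2}$ Leibniz rule for the restricted Laplacian cannot hold.
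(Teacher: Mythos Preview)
Your proposal is correct, but the route to the weighted $L^{1}$ estimate is genuinely different from the paper's. The paper avoids Green's function asymptotics entirely: it tests the equation against the Kato truncation $T_{\varepsilon}(u)=\operatorname{sign}(u)\min(\varepsilon,|u|)$, uses the monotonicity of $T_{\varepsilon}$ to make the bilinear form nonnegative, and keeps only the $\Omega\times\Omega^{c}$ piece, which (since $u\equiv 0$ off $\Omega$) becomes $\int_{\Omega^{c}}\int_{\Omega}|u(x)||T_{\varepsilon}(u(x))||x-y|^{-d-2\alpha}\,dx\,dy\leq\varepsilon\|f\|_{L^{1}}$. Dividing by $\varepsilon$, letting $\varepsilon\to0$, and recognising $\int_{\Omega^{c}}|x-y|^{-d-2\alpha}\,dy\sim d(x)^{-2\alpha}$ gives the weighted bound in two lines, with no appeal to Chen--Song/Kulczycki. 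Your Green's-function argument works as well (the delicate point, the uniform bound $\sup_{y}\int_{\Omega}G(x,y)d(x)^{-2\alpha}\,dx<\infty$, does go through once one uses the boundary factor $d(x)^{\alpha}$ in $G$), but it imports heavier potential-theoretic machinery; the paper's truncation trick is more elementary and self-contained, while your approach is more flexible if one later wants pointwise or weighted $L^{p}$ refinements. For the Sobolev part the two arguments coincide: both extend $(-\Delta)^{\alpha}u$ to $L^{1}(\mathbb{R}^{d})$ via the weighted bound and then invoke standard $L^{1}$ regularity with a $\delta$-loss (the paper phrases this as ``standard regularity theory'', your Besov route $L^{1}\to B^{2\alpha}_{1,\infty}\hookrightarrow B^{2\alpha-\delta}_{1,1}$ makes the mechanism explicit).
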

\begin{proof}
	Using $T_\varepsilon(u(x))=\sign (u(x))\min\{\varepsilon,|u(x)|\}$ as test function,
	\begin{equation}
	\int_{\mathbb{R}^d}\int_{\mathbb{R}^d}\frac{|u(x)-u(y)| |T_\varepsilon(u(x))-T_\varepsilon(u(y))|}{|x-y|^{d+2\alpha}} dxdy=2\int T_\varepsilon(u(x))f(x) dx.
	\end{equation}
	This implies
	\begin{align*}
	\int_{\Omega^c}\int_{\Omega}\frac{|u(x)| |T_\varepsilon(u(x))|}{|x-y|^{d+2\alpha}} dxdy=\int_{\Omega^c}\int_{\Omega}\frac{|u(x)-u(y)| |T_\varepsilon(u(x))-T_\varepsilon(u(y))|}{|x-y|^{d+2\alpha}} dxdy\leq \varepsilon |f||_{L^1(\Omega)}.
	\end{align*}
	So,
	\begin{align*}
	\int_{\Omega^c}\int_{\Omega}\frac{|u(x)| \varepsilon^{-1} T_\varepsilon(|u(x)|)}{|x-y|^{d+2\alpha}} dxdy\leq  |f||_{L^1(\Omega)}.
	\end{align*}
	Letting $\varepsilon\to 0$ to get
$$
	\int_{\Omega^c}\int_{\Omega}\frac{|u(x)|}{|x-y|^{d+2\alpha}} dxdy\leq  ||f||_{L^1(\Omega)}.$$
	Since
$$
	\int_{\Omega^c}\frac{1}{|x-y|^{d+2\alpha}} dy\sim \dist(x,\partial\Omega)^{-2\alpha},$$
	so,
	\begin{equation}\label{Z4}
	\int_{\Omega}\frac{|u(x)|}{dist(x,\partial\Omega)^{2\alpha}} dx\lesssim ||f||_{L^1(\Omega)}.
	\end{equation}
	Moreover,
	we can write
$$
	\int_{\mathbb{R}^d}\frac{u(x)-u(y)}{|x-y|^{d+2\alpha}} dy =\mathbf{1}_{x\in \Omega}f(x)+\mathbf{1}_{x\notin \Omega}\int_{\Omega}\frac{-u(y)}{|x-y|^{d+2\alpha}} dy:=g(x),$$
	for any $x\in \mathbb{R}^d$.
	Thus, by the standard regularity theory, one has \begin{equation}
	||u||_{W^{2\alpha-\delta,1}(\mathbb{R}^d)}\lesssim_{\Omega,\delta} ||g||_{L^1(\mathbb{R}^d)},
	\end{equation}
	for any $\delta\in (0,\alpha/2)$.
	Since
	\begin{align*}
	||g||_{L^1(\mathbb{R}^d)}&\leq ||f||_{L^1(\Omega)}+\int_{\Omega^c}\int_{\Omega}\frac{|u(y)|}{|x-y|^{d+2\alpha}} dydx\\&\lesssim  ||f||_{L^1(\Omega)}+\int_{\Omega}\frac{|u(y)|}{\dist(y,\partial\Omega)^{2\alpha}} dy\\&\overset{\eqref{Z4}}\lesssim ||f||_{L^1(\Omega)},
	\end{align*}
	we get \eqref{Z5}.
\end{proof}

\vspace{1cm}

\noindent {\textbf{\large \sc Acknowledgments.}} Quoc-Hung Nguyen is  supported  by the Shanghai Tech University startup fund. J.L.V. partially funded by Project  PGC2018-098440-B-I00 from  MICINN,  of the Spanish Government. Partially performed as an Honorary Professor at Univ. Complutense de Madrid.

\bibliographystyle{alpha}
\bibliography{biblio}

\newcommand{\etalchar}[1]{$^{#1}$}
\begin{thebibliography}{CdPF{\etalchar{+}}17}

\bibitem[BSS20a]{BSS2}
Denis Brazke, Armin Schikorra, and Yannick Sire.
\newblock {\em In preparation}, 2020.

\bibitem[BSS20b]{BSS}
Denis Brazke, Armin Schikorra, and Yannick Sire.
\newblock Characterization of bmo via carleson measures on riemannian
  manifolds.
\newblock {\em To appear IMRN}, 2020.

\bibitem[BSV15]{BonSirVaz}
Matteo Bonforte, Yannick Sire, and Juan~Luis V\'{a}zquez.
\newblock Existence, uniqueness and asymptotic behaviour for fractional porous
  medium equations on bounded domains.
\newblock {\em Discrete Contin. Dyn. Syst.}, 35(12):5725--5767, 2015.

\bibitem[BV16]{Bucur}
Claudia Bucur and Enrico Valdinoci.
\newblock {\em Nonlocal diffusion and applications}, volume~20 of {\em Lecture
  Notes of the Unione Matematica Italiana}.
\newblock Springer, [Cham]; Unione Matematica Italiana, Bologna, 2016.

\bibitem[CDDS11]{CDDS}
Antonio Capella, Juan D\'{a}vila, Louis Dupaigne, and Yannick Sire.
\newblock Regularity of radial extremal solutions for some non-local semilinear
  equations.
\newblock {\em Comm. Partial Differential Equations}, 36(8):1353--1384, 2011.

\bibitem[CdPF{\etalchar{+}}17]{CarrCIME}
Jos\'{e}~Antonio Carrillo, Manuel del Pino, Alessio Figalli, Giuseppe Mingione,
  and Juan~Luis V\'{a}zquez.
\newblock {\em Nonlocal and nonlinear diffusions and interactions: new methods
  and directions}, volume 2186 of {\em Lecture Notes in Mathematics}.
\newblock Springer, Cham; Fondazione C.I.M.E., Florence, 2017.
\newblock Lectures from the CIME Course held in Cetraro, July 4--8, 2016,
  Edited by Matteo Bonforte and Gabriele Grillo, Fondazione CIME/CIME
  Foundation Subseries.

\bibitem[CI16]{CI}
Peter Constantin and Mihaela Ignatova.
\newblock Critical {SQG} in bounded domains.
\newblock {\em Ann. PDE}, 2(2):Art. 8, 42, 2016.

\bibitem[CS05]{ChenSong}
Zhen-Qing Chen and Renming Song.
\newblock Two-sided eigenvalue estimates for subordinate processes in domains.
\newblock {\em J. Funct. Anal.}, 226(1):90--113, 2005.

\bibitem[CS07]{caffaSilvestre}
Luis Caffarelli and Luis Silvestre.
\newblock An extension problem related to the fractional {L}aplacian.
\newblock {\em Comm. Partial Differential Equations}, 32(7-9):1245--1260, 2007.

\bibitem[CT10]{cabreTan}
Xavier Cabr\'{e} and Jinggang Tan.
\newblock Positive solutions of nonlinear problems involving the square root of
  the {L}aplacian.
\newblock {\em Adv. Math.}, 224(5):2052--2093, 2010.

\bibitem[CY17]{changYang}
Sun Yung~Alice Chang and Ray~A. Yang.
\newblock On a class of non-local operators in conformal geometry.
\newblock {\em Chin. Ann. Math. Ser. B}, 38(1):215--234, 2017.

\bibitem[Dav90]{davies}
E.~B. Davies.
\newblock {\em Heat kernels and spectral theory}, volume~92 of {\em Cambridge
  Tracts in Mathematics}.
\newblock Cambridge University Press, Cambridge, 1990.

\bibitem[Fre13]{frey}
Dorothee Frey.
\newblock Paraproducts via {$H^\infty$}-functional calculus.
\newblock {\em Rev. Mat. Iberoam.}, 29(2):635--663, 2013.

\bibitem[KP88]{KP}
Tosio Kato and Gustavo Ponce.
\newblock Commutator estimates and the {E}uler and {N}avier-{S}tokes equations.
\newblock {\em Comm. Pure Appl. Math.}, 41(7):891--907, 1988.

\bibitem[KPV93]{KPV}
Carlos~E. Kenig, Gustavo Ponce, and Luis Vega.
\newblock Well-posedness and scattering results for the generalized
  {K}orteweg-de {V}ries equation via the contraction principle.
\newblock {\em Comm. Pure Appl. Math.}, 46(4):527--620, 1993.

\bibitem[KSV19]{KSV2}
Panki Kim, Renming Song, and Zoran Vondra\v{c}ek.
\newblock Potential theory of subordinate killed {B}rownian motion.
\newblock {\em Trans. Amer. Math. Soc.}, 371(6):3917--3969, 2019.

\bibitem[KSV20]{KSV1}
Panki Kim, Renming Song, and Zoran Vondra\v{c}ek.
\newblock On the boundary theory of subordinate killed {L}\'{e}vy processes.
\newblock {\em Potential Anal.}, 53(1):131--181, 2020.

\bibitem[Li19]{dong}
Dong Li.
\newblock On {K}ato-{P}once and fractional {L}eibniz.
\newblock {\em Rev. Mat. Iberoam.}, 35(1):23--100, 2019.

\bibitem[LS20]{LenzSchi}
Enno Lenzmann and Armin Schikorra.
\newblock Sharp commutator estimates via harmonic extensions.
\newblock {\em Nonlinear Anal.}, 193:111375, 2020.

\bibitem[LS21]{dongYannick}
Dong Li and Yannick Sire.
\newblock Higher order kato-ponce estimates and counter-examples.
\newblock {\em In preparation}, 2021.

\bibitem[MN14]{MusNaz14}
Roberta Musina and Alexander~I. Nazarov.
\newblock On fractional {L}aplacians.
\newblock {\em Comm. Partial Differential Equations}, 39(9):1780--1790, 2014.

\bibitem[NV18]{HungJuan}
Quoc-Hung Nguyen and Juan-Luis Vazquez.
\newblock Porous medium equation with nonlocal pressure in a bounded domain.
\newblock {\em Com PDEs.}, 43:1502--1539, 2018.

\bibitem[ST10]{Stinga}
Pablo~Ra\'{u}l Stinga and Jos\'{e}~Luis Torrea.
\newblock Extension problem and {H}arnack's inequality for some fractional
  operators.
\newblock {\em Comm. Partial Differential Equations}, 35(11):2092--2122, 2010.

\bibitem[Ste70]{stein}
Elias~M. Stein.
\newblock {\em Singular integrals and differentiability properties of
  functions}.
\newblock Princeton Mathematical Series, No. 30. Princeton University Press,
  Princeton, N.J., 1970.

\bibitem[SV14]{ServVa14}
Raffaella Servadei and Enrico Valdinoci.
\newblock On the spectrum of two different fractional operators.
\newblock {\em Proc. Roy. Soc. Edinburgh Sect. A}, 144(4):831--855, 2014.

\bibitem[Zyg59]{zygmund}
A.~Zygmund.
\newblock {\em Trigonometric series. 2nd ed. {V}ols. {I}, {II}}.
\newblock Cambridge University Press, New York, 1959.

\end{thebibliography}

\vskip 1cm

{2010 Mathematics Subject Classification.} {Primary: 42B37.
Secondary:  35J25}

{\bf Keywords: } Fractional Laplacian operators on domains, commutator estimates, Leibniz rule .

\

\end{document}